\newtheoremstyle{myremark} 
    {7pt}                    
    {7pt}                    
    {}  	                 
    {}                           
    {\bf}       	         
    {.}                          
    {.5em}                       
    {}  
\theoremstyle{plain}
\newtheorem{lemma}{Lemma}[section]
\newtheorem{fact}[lemma]{Fact}
\newtheorem{definition}[lemma]{Definition}
\newtheorem{corollary}[lemma]{Corollary}
\newtheorem{proposition}[lemma]{Proposition}
\theoremstyle{myremark}
\newtheorem{remark}[lemma]{Remark}
\newtheorem{example}[lemma]{Example}
\newcommand{\rat}{\mathbb{Q}}
\newcommand{\inter}{}
\newcommand{\ind}{I}
\newcommand{\htpyequiv}{\simeq}
\newcommand{\incl}{\hookrightarrow}
\renewcommand{\subset}{\subseteq}
\newcommand{\susp}{\Sigma}
\newcommand{\lk}{\mathrm{lk}}
\newcommand{\st}{\mathrm{st}}
\newcommand{\hexdim}{\mathbb{H}}
\newcommand{\redhom}{\widetilde{H}}
\newcommand{\redbetti}{\widetilde{\beta}}
\newcommand{\tri}{\boldsymbol{\Delta}}
\newcommand{\Michal}[1]{}
\begin{document}
\title{Special cycles in independence complexes and superfrustration in some lattices}

\author[Micha{\l} Adamaszek]{Micha{\l} Adamaszek}
\address{Mathematics Institute and DIMAP,
      \newline University of Warwick, Coventry, CV4 7AL, UK}
\email{aszek@mimuw.edu.pl}
\thanks{Research supported by the Centre for Discrete
        Mathematics and its Applications (DIMAP), EPSRC award EP/D063191/1.}

\keywords{Independence complex, Grid, Kagome lattice, Betti numbers, Simplicial homology}
\subjclass[2010]{55U10, 55P15, 05C69, 82B20}

\begin{abstract}
We prove that the independence complexes of some grids have exponential Betti numbers. This corresponds to the number of ground states in the hard-core model in statistical physics with fermions in the vertices of the grid.
\end{abstract}
\maketitle

\section{Introduction}
\label{sect:intro}
The purpose of this paper is to investigate some topological questions arising from the study of supersymmetric lattice models in statistical physics. Suppose one has a finite graph $L$, which in applications is usually a periodic lattice with some boundary conditions. Square, triangular or hexagonal grids are the most notable examples. The vertices of the graph can be occupied by particles, such as fermions, which satisfy the \emph{hard-core} restriction: two adjacent vertices cannot be occupied simultaneously. A configuration of particles which satisfies this assumption is an \emph{independent set} in the graph $L$.

Associated with $L$ there is a simplicial complex called the \emph{independence complex} of $L$ and denoted $\ind(L)$. Its vertices are the vertices of $L$ and its faces are the independent sets in $L$. It is a standard object studied in combinatorial algebraic topology. There is a close connection between the simplicial and topological invariants of $\ind(L)$ and certain characteristics of the corresponding lattice model which are of interest to physicists. It is beyond the scope of this paper to discuss this relationship in detail; we refer to \cite{HuSch2} and we limit ourselves to presenting just the most basic dictionary:
\begin{center}
\begin{tabular}{l|l}
the partition function of $L$ & the $f$-polynomial of $\ind(L)$,\\
the Witten index of $L$ & minus the reduced Euler characteristic $-\widetilde{\chi}(\ind(L))$,\\
the number of zero energy ground states & the dimension of $\redhom_*(\ind(L);\rat)$.
\end{tabular}
\end{center}
There has been some very successful work calculating the Witten index, homology groups or indeed the complete homotopy type of the independence complex for various lattices, eg. \cite{BLN,Fen2,HHFS,HuSch1,HuOthers,JonCylinder,JonRhombus,JonDiag,Thapper}. In this paper we focus on the large-scale picture. Computer simulations of van Eerten \cite{Eer} indicate that for some types of lattices, as their size increases, the number of ground states grows exponentially with the number of vertices, that is
$$\dim \redhom_*(\ind(L))\sim a^{v(L)}$$
for some constant $a$ depending on the type of the lattice, where $v(L)$ denotes the number of vertices in a graph $L$. This situation is called \emph{superfrustration} and has interesting physical implications, see \cite{HuSch1}. Engstr\"om \cite{EngWitten} developed a general method of computing \emph{upper} bounds for the constant $a$. For the lattices of \cite{Eer} it gives bounds very close to the values predicted in \cite{Eer}. 

This paper has two main parts. In the first one we present a method which can be used to construct exponentially many linearly independent homology classes in $\ind(L)$ for graphs $L$ of certain type. That proves superfrustration of certain lattices and we give examples based on modifications of the triangular lattice. In the second part we prove a generalization of the main result of \cite{EngWitten}, which can sometimes give better upper bounds.

Both methods work particularly nicely with one type of lattice studied in \cite{Eer,EngWitten}: the \emph{hexagonal dimer}, also known as the \emph{Kagome lattice} (see Fig.\ref{fig:hexdimer}). Under suitable divisibility conditions on the height and width we will prove that a graph $\hexdim$ of that type satisfies:
$$1.02^{v(\hexdim)}\approx (2^{1/36})^{v(\hexdim)} \leq \dim \redhom_*(\ind(\hexdim)) \leq (14^{1/36}\cdot2^{1/6})^{v(\hexdim)}\approx 1.21^{v(\hexdim)}.$$
We will prove the lower bound in Section \ref{sect:hexdimer} and the upper bound in Section \ref{sect:upperbounds}. The previous upper bound of \cite{EngWitten} was $2^{1/3}\approx 1.26$ and the experimental approximation by \cite{Eer} is $1.25\pm0.1$.

Our technique for lower bounds produces slightly more than just homology classes: we obtain a large wedge of spheres that splits off. For instance, for a suitable lattice $\hexdim$ of Kagome type, this reads as a homotopy equivalence
$$\ind(\hexdim)\htpyequiv \Big(\bigvee^{(2^{1/36})^{v(\hexdim)}} S^{2v(\hexdim)/9-1}\Big) \vee X$$
for some space $X$. This type of result is proved in Section \ref{sect:homotopical}.

\begin{remark}
Estimations as above can be compared against the absolute upper bound: for any graph $G$ we have
$$\dim \redhom_*(\ind(G)) \leq (2^{2/5})^{v(G)}\approx 1.32^{v(G)}.$$
This follows from the results of \cite{KozHulls}; for another short proof see \cite{Ada}.
\end{remark}

\subsection*{Acknowledgement.} The author thanks Alexander Engstr\"om, Liza Huijse and Kareljan Schoutens for very helpful suggestions.

\subsection{Notation.}
For a graph $G$ we denote by $V(G)$ the set of vertices and by $v(G)$ its cardinality. For any vertex $v$ we write $N[v]$ for the \emph{closed neighbourhood} of $v$, that is the set consisting of $v$ and all its adjacent vertices. For any set $W\subset V(G)$ we define $N[W]=\cup_{v\in W} N[v]$. 

For any simplicial complex $K$ and a subset $U\subset V(K)$ of the vertices $K[U]$ denotes the induced subcomplex of $K$ with vertex set $U$. The same notation is used for graphs. If $H$ is an induced subgraph of $G$ then $\ind(H)$ is an induced subcomplex of $\ind(G)$. By $|K|$ we denote the number of faces in $K$, including the empty one. The join $K\ast L$ of two complexes $K$ and $L$ is the simplicial complex with faces of the form $\sigma\sqcup\tau$ for $\sigma\in K$ and $\tau\in L$. For any two graphs $G$ and $H$, if $G\sqcup H$ is their disjoint union, we have
$$\ind(G\sqcup H)=\ind(G)\ast\ind(H).$$
The symbol $S^k$ denotes the topological sphere of dimension $k$. The suspension $\susp\,K$ is the join $K\ast S^0$. If $e$ is the graph consisting of a single edge then $\ind(e)=S^0$.

The reduced homology and cohomology groups of $K$, denoted $\redhom_*(K)$, $\redhom^*(K)$, are the homology groups of the augmented chain, resp. cochain complex of $K$. Throughout the paper we always use rational coefficients and omit them from notation. We have $\redhom_i(\susp\,K)=\redhom_{i-1}(K)$. There is a standard bilinear pairing, denoted $\langle\cdot,\cdot\rangle$:
$$\langle\cdot,\cdot\rangle:\redhom^i(K)\otimes \redhom_i(K)\to\rat$$
given by evaluating cochains on chains.
The $i$-th reduced Betti number is $\redbetti_i(K)=\dim\redhom_i(K)$ and the \emph{total Betti number} of $K$ is $$\redbetti(K)=\sum_i\redbetti_i(K).$$
It satisfies $\redbetti(\susp\,K)=\redbetti(K)$.

Note the empty simplicial complex $\emptyset$ which has no vertices and a unique face $\emptyset$. It satisfies $\susp\,\emptyset=S^0$, so it is good to think of it as $S^{-1}$. It has a single reduced homology group $\redhom_{-1}(\emptyset)=\rat$ and in particular $\redbetti(\emptyset)=1$. Of course for every non-empty space $K$ we have $\redhom_i(K)=0$ for $i<0$.

We do not distinguish between a simplicial complex and its geometric realization. The symbol $\htpyequiv$ means homotopy equivalence. The reference for other notions of combinatorial algebraic topology is \cite{Book}.

\section{Cycles defined by matchings}
A \emph{matching} of size $k$ is the disjoint union of $k$ edges. An \emph{induced matching} in a graph $G$ is a matching which is an induced subgraph of $G$. Explicitly, it is a set of $k$ edges of $G$ such that any two vertices from distinct edges are non-adjacent in $G$.

\begin{definition}
Suppose $M$ is an induced matching in a graph $G$. A subset $\sigma$ of the vertices of $M$ will be called a \emph{transversal} if it contains exactly one vertex from each edge of $M$. A transversal is \emph{dominating} if it is a dominating set in $G$ i.e. if $N[\sigma]=V(G)$.
\end{definition}

Every transversal of an induced matching is an independent set in $G$. It is dominating if and only if it is a maximal independent set.

If $M$ is a disjoint union of $k$ edges then $\ind(M)=S^0\ast\cdots\ast S^0=S^{k-1}$ is the boundary of the cross-polytope. If $M$ is an induced matching in $G$ then we obtain an embedding $S^{k-1}=\ind(M)\incl\ind(G)$. We will denote by $\alpha_M\in \redhom_{k-1}(\ind(G))$ the image of the fundamental class of $S^{k-1}$ under this embedding (there is a choice of orientations involved, but it does not matter). Explicitly, if $M=\{(v_1,w_1),(v_2,w_2),\ldots,(v_k,w_k)\}$ then $\alpha_M$ is represented by the cycle
\begin{equation}
([v_1]-[w_1])\wedge([v_2]-[w_2])\wedge\cdots\wedge([v_k]-[w_k]).
\end{equation}

If $\sigma$ is an independent set of cardinality $k$ in $G$ then we denote by $\sigma^\vee\in C^{k-1}(\ind(G))$ the cochain which associates $\pm 1$ to the two orientations of $\sigma$ and $0$ to all other simplices (again, this depends, up to sign, on the choice of orientation for $\sigma$). If $\sigma$ is a maximal independent set (hence a maximal face in $\ind(G)$) then this cochain is in fact a cocycle, hence it determines an element of $\redhom^{k-1}(\ind(G))$ which we continue to denote $\sigma^\vee$.

\begin{lemma}
If $(M,\sigma)$ is an induced matching with a dominating transversal then $\alpha_M$ and $\sigma^\vee$ are nonzero classes in $\redhom_*(\ind(G))$ and $\redhom^*(\ind(G))$, respectively. Moreover
$$\langle\sigma^\vee, \alpha_M\rangle=\pm 1.$$
\end{lemma}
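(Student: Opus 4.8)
The plan is to compute the pairing $\langle \sigma^\vee,\alpha_M\rangle$ directly on the given representatives and then read off all three assertions from the answer. Write $M=\{(v_1,w_1),\ldots,(v_k,w_k)\}$ and recall that $\alpha_M$ is represented by the chain $z=([v_1]-[w_1])\wedge\cdots\wedge([v_k]-[w_k])\in C_{k-1}(\ind(G))$. Each factor $[v_i]-[w_i]$ is a cycle in the augmented chain complex of $\ind(e_i)$ (where $e_i$ is the $i$-th edge of $M$), so $z$ is a cycle; in fact it is precisely the image of the fundamental class of $\ind(M)=S^{k-1}$ under the embedding $\ind(M)\incl\ind(G)$. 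Expanding the product, I would write
$$z=\sum_{S\subset\{1,\ldots,k\}}\pm\,[u_1^S u_2^S\cdots u_k^S],$$
where $u_i^S=v_i$ for $i\notin S$ and $u_i^S=w_i$ for $i\in S$, each coefficient being $\pm1$. The $2^k$ simplices occurring here are exactly the $2^k$ transversals of $M$, each appearing exactly once.

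Next I would verify that $\sigma^\vee$ is a genuine cocycle, which is where the hypothesis that the transversal $\sigma$ is \emph{dominating} is used: a dominating transversal is a maximal independent set in $G$, hence $\sigma$ is a maximal face of $\ind(G)$, so no simplex of $\ind(G)$ strictly contains $\sigma$; therefore $\delta\sigma^\vee$ vanishes on every $k$-simplex and $\sigma^\vee\in C^{k-1}(\ind(G))$ is closed, defining a class in $\redhom^{k-1}(\ind(G))$ as asserted.

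Now evaluate. By definition $\sigma^\vee$ takes the value $\pm1$ on the two orientations of $\sigma$ and $0$ on every other simplex. Since $\sigma$ is a transversal of $M$, it is one — and only one — of the $2^k$ simplices appearing in the expansion of $z$; hence $\langle\sigma^\vee,z\rangle=\pm1$. Because $z$ is a cycle and $\sigma^\vee$ a cocycle, this number is exactly the pairing $\langle\sigma^\vee,\alpha_M\rangle$ of the corresponding (co)homology classes, so $\langle\sigma^\vee,\alpha_M\rangle=\pm1$. Finally, non-vanishing of both classes is formal: if $\alpha_M=0$ in $\redhom_{k-1}(\ind(G))$ then $\langle c,\alpha_M\rangle=0$ for every cohomology class $c$, contradicting $\langle\sigma^\vee,\alpha_M\rangle=\pm1$, and symmetrically $\sigma^\vee\neq0$.

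I do not expect a serious obstacle here — the argument is essentially bookkeeping — but the two points deserving care are the combinatorial claim that the join-product chain $z$ expands, with unit coefficients, into a sum over exactly the transversals of $M$ (so that the sign conventions and the reduced/augmented setup are consistent), and the observation that "dominating" is precisely what is needed to make $\sigma^\vee$ a legitimate cohomology class rather than merely a cochain.
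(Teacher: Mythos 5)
Your argument is correct and is essentially the same as the paper's, just spelled out in more detail: the paper's one-line proof likewise observes that $\sigma^\vee$ evaluates to $\pm 1$ on exactly one simplex in the chain representation of $\alpha_M$ (namely the transversal $\sigma$ itself), and derives nonvanishing of both classes from the nondegenerate pairing. Your additional remarks — that the expansion of $z$ runs exactly over the $2^k$ transversals, and that ``dominating'' is precisely what makes $\sigma^\vee$ a cocycle via maximality of $\sigma$ as a face of $\ind(G)$ — are the same facts the paper establishes in the surrounding text before stating the lemma.
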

\begin{proof}
The last statement holds because $\sigma^\vee$ evaluates to $\pm 1$ on exactly one of the simplices in the chain representation of $\alpha_M$. It immediately implies that both elements are nonzero.
\end{proof}

This method of constructing homology classes $\alpha_M$ has been used before, also in the context of grids \cite{JonGrids} where they are called \emph{cross-cycles}.

\begin{example}
Consider the cycle $C_6$ with vertex set $V(C_6)=\{1,2,3,4,5,6\}$ and with two matchings with dominating transversals:
\begin{eqnarray*}
M_1=\{(1,2),(4,5)\}, & \sigma_1=\{1,4\},\\
M_2=\{(2,3),(5,6)\}, & \sigma_2=\{2,5\}.
\end{eqnarray*}
Assuming that the above order defines positively oriented simplices we get:
$$\langle\sigma_i^\vee, \alpha_{M_i}\rangle=1 \textrm{ for } i=1,2,\quad \langle\sigma_1^\vee, \alpha_{M_2}\rangle=0, \quad \langle\sigma_2^\vee, \alpha_{M_1}\rangle=1.$$
It follows that $\alpha_{M_1}$ and $\alpha_{M_2}$ are linearly independent elements of $\redhom_1(\ind(C_6))$, hence they are a basis, as $\ind(C_6)\htpyequiv S^1\vee S^1$ (see \cite{Koz}). It means that if $M_3=\{(3,4),(6,1)\}$ is the third matching of this kind in $C_6$ then we must have $\alpha_{M_3}=a_1\alpha_{M_1}+a_2\alpha_{M_2}$ for some $a_1,a_2$. Applying $\sigma_1^\vee$ and $\sigma_2^\vee$ to this equation we get $-1=a_1$ and $0=a_1+a_2$ hence $\alpha_{M_3}=-\alpha_{M_1}+\alpha_{M_2}$.
\end{example}

\begin{remark}
In general, if $\alpha_1,\ldots,\alpha_k\in \redhom_i(X)$ and $\gamma_1,\ldots,\gamma_l\in \redhom^i(X)$ then the $i$-th Betti number $\redbetti_i(X)$ is at least as big as the rank of the $l\times k$ matrix with entries $\langle\gamma_s, \alpha_t\rangle$ for ${1\leq s\leq l}$, ${1\leq t\leq k}$.
\end{remark}

\section{Hexagonal dimer and related grids}
\label{sect:hexdimer}
The \emph{hexagonal dimer} or the \emph{Kagome lattice} is the lattice obtained from the triangular lattice by erasing every other line in each direction in the way shown in Fig.\ref{fig:hexdimer}. It is invariant under the translations by $(2,0)$ and $(0,\sqrt{3})$. Let $\hexdim_{n,m}$ denote the quotient of that lattice by the action of the translation group generated by the vectors $n\cdot(2,0)$ and $m\cdot(0,\sqrt{3})$. It has $v(\hexdim_{n,m})=3nm$ vertices and if one additionally assumes that $6|n$ and $4|m$ then it can be tiled with large hexagons in the way shown in Fig.\ref{fig:hexdimer}. 

In fact there is no harm forgetting about $n$ and $m$. Let $\hexdim$ be any quotient of the hexagonal dimer lattice (i.e. its finite portion with cyclic boundary conditions) with the property that it can be covered in this way by the large hexagons. Then the number of those hexagons is always $v(\hexdim)/36$ and we have the next result.

\begin{center}
\begin{figure}
\includegraphics[scale=0.8]{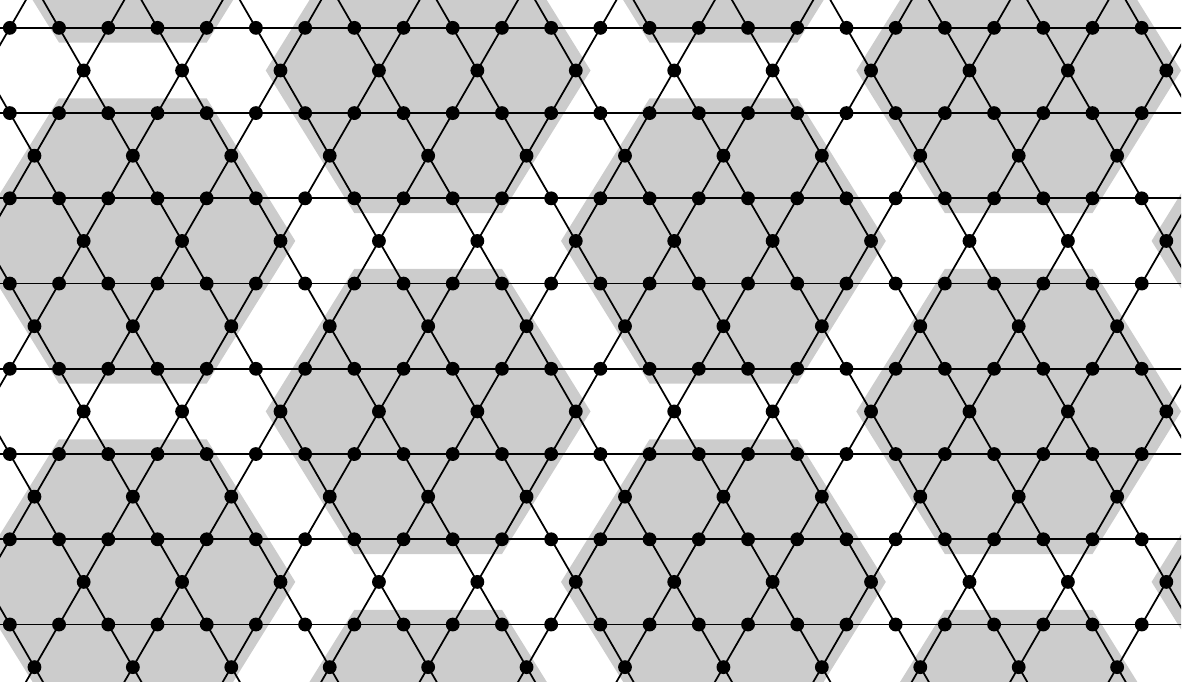}
\vskip0.3cm	
\begin{tabular}{cc}\includegraphics[scale=0.8]{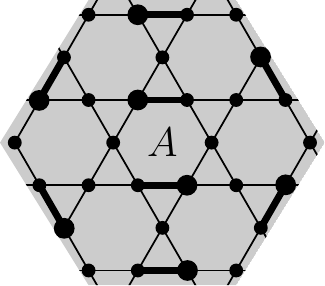} & \includegraphics[scale=0.8]{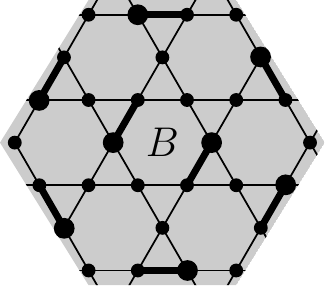}
\end{tabular}
\caption{The hexagonal dimer lattice, its tiling with large hexagons and two types of tiles.}
\label{fig:hexdimer}
\end{figure}
\end{center}

\begin{proposition}
\label{prop:lowerboundh}
For a hexagonal dimer lattice $\hexdim$ which admits a tiling as in Fig.\ref{fig:hexdimer} we have
$$\redbetti(\ind(\hexdim))\geq (2^{1/36})^{v(\hexdim)}.$$
\end{proposition}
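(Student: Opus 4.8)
The plan is to realize $\ind(\hexdim)$ as built up, in a homologically controlled way, from the tiles of the hexagon decomposition, and to extract from each tile a local contribution of $2^{1/36}$ per vertex, i.e.\ a factor of $2$ per hexagon (since each large hexagon carries $36$ vertices). Concretely, I would fix the tiling of $\hexdim$ by the two types of large hexagonal tiles shown in Fig.\ref{fig:hexdimer}, and on each tile $T$ exhibit \emph{two} induced matchings $M^T_0, M^T_1$, each equipped with a transversal that is ``locally dominating'' (it dominates all vertices of $T$), chosen so that the matchings on distinct tiles have disjoint and mutually non-adjacent vertex sets. Since the tiles partition $V(\hexdim)$ and the lattice is covered by them, one checks that for any choice of $\epsilon=(\epsilon_T)_T \in \{0,1\}^{v(\hexdim)/36}$ the union $M_\epsilon = \bigsqcup_T M^T_{\epsilon_T}$ is an induced matching in $\hexdim$, and the union of the corresponding transversals, $\sigma_\epsilon$, is a dominating transversal of $M_\epsilon$ (it dominates each tile, hence all of $\hexdim$). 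This produces $2^{v(\hexdim)/36}$ pairs $(M_\epsilon,\sigma_\epsilon)$, each giving a homology class $\alpha_{M_\epsilon}$ and a cohomology class $\sigma_\epsilon^\vee$ by the Lemma.

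Next I would compute the pairing matrix $\big(\langle \sigma_{\epsilon'}^\vee, \alpha_{M_\epsilon}\rangle\big)_{\epsilon,\epsilon'}$ and show it has full rank $2^{v(\hexdim)/36}$; by the Remark after the Lemma this forces $\redbetti(\ind(\hexdim)) \geq 2^{v(\hexdim)/36} = (2^{1/36})^{v(\hexdim)}$. The pairing $\langle\sigma_{\epsilon'}^\vee,\alpha_{M_\epsilon}\rangle$ is nonzero only if $\sigma_{\epsilon'}$ appears (up to orientation) among the simplices in the cross-polytope chain of $\alpha_{M_\epsilon}$, which by the product structure over tiles happens exactly when, on every tile $T$, the vertex of $\sigma_{\epsilon'}$ lying in $T$ is one of the two transversal vertices of the matching $M^T_{\epsilon_T}$. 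The cleanest way to get full rank is to arrange the two transversals on each tile to ``detect'' the two matchings: design the tile gadget so that the transversal $\sigma^T_i$ of $M^T_i$ evaluates nontrivially on the cross-polytope of $M^T_j$ for all $i,j$ (as in the $C_6$ example), or — even simpler — so that $\sigma^T_i$ pairs nontrivially with $M^T_j$ if and only if $i=j$, which makes the global matrix a tensor product of $2\times 2$ identity-like blocks and hence of rank $2^{v(\hexdim)/36}$. Either way the global matrix factors as a Kronecker product of the per-tile $2\times 2$ matrices, so it is invertible as soon as each local $2\times 2$ matrix is.

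The main obstacle, and the only place real work is needed, is the explicit construction on the two tile types: one must pin down, in the actual Kagome geometry, four edges (two matchings of two edges each) inside each tile together with their transversals, and verify three things simultaneously — (i) each matching is \emph{induced} in $\hexdim$, including across the tile boundary, which is why the vertices used must sit in the interior of the tile away from the gluing region; (ii) the chosen transversal dominates \emph{every} vertex of the tile, so that gluing gives a dominating transversal of $\hexdim$; and (iii) the $2\times 2$ local pairing matrix is nonsingular. This is a finite check on two fixed small configurations, analogous to the $C_6$ computation in the Example, so I would carry it out by drawing the two tiles, labelling vertices, exhibiting the matchings and transversals, and tabulating the four pairings $\langle (\sigma^T_i)^\vee, \alpha_{M^T_j}\rangle$. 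Once the tile gadgets are in hand, the global statement follows formally from the product/Kronecker structure and the Remark, with no further estimates.
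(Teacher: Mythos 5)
Your overall strategy matches the paper's exactly: tile $\hexdim$ with large hexagons, exhibit on each tile two local matching/transversal gadgets, and extract full rank of the global pairing matrix from a local $2\times 2$ invertibility statement via the tensor-product structure over tiles. The paper carries this out with two tiles $A$, $B$ (Fig.~\ref{fig:hexdimer}) and the observation that $\langle\sigma(t)^\vee,\alpha_{M(s)}\rangle=0$ whenever $t_i=B$ and $s_i=A$ for some coordinate $i$, and is $\pm1$ otherwise; ordering the $\{A,B\}$-words lexicographically then makes the global matrix lower triangular with $\pm1$ on the diagonal, so full rank follows with no need to control signs beyond the zero pattern.

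Two small inaccuracies in your plan are worth flagging. First, your ``cleanest way'' option --- arranging that $\langle(\sigma^T_i)^\vee,\alpha_{M^T_j}\rangle\neq0$ for \emph{all} $i,j$ --- does not by itself give an invertible local $2\times2$ matrix (the all-ones matrix is singular); what you actually need is a zero pattern that forces invertibility, which both your diagonal alternative and the paper's triangular pattern achieve. Second, the $C_6$ example you cite is not an ``all pairings nonzero'' example: it has $\langle\sigma_1^\vee,\alpha_{M_2}\rangle=0$, i.e.\ precisely the triangular structure the paper exploits. Finally, the diagonal (Kronecker-of-identity) version you propose requires both $\sigma^T_A$ to fail to be a transversal of $M^T_B$ \emph{and} vice versa; whether that can be realized with tiles that simultaneously dominate all boundary vertices is a nontrivial geometric question that the paper sidesteps by accepting the asymmetric, triangular pattern, which is easier to engineer. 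So the framework is right, but the ``finite check on two fixed small configurations'' you defer is exactly where you should aim for a triangular rather than diagonal local pairing.
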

\begin{proof}
Let $k=v(\hexdim)/36$ be the number of large hexagons in the tiling of $\hexdim$. We are going to construct $2^k$ linearly independent elements in $\redhom_{8k-1}(\ind(\hexdim))$. That proves the claim since then $\redbetti(\ind(\hexdim))\geq 2^k=2^{v(\hexdim)/36}$.

Consider the tiles A and B of Fig.\ref{fig:hexdimer}, where they are shown with an induced matching (thick edges) and its dominating transversal (thick vertices). Note that for any placement of $A$ and $B$ in place of the large hexagons in the grid we obtain a valid induced matching with a dominating transversal in $\hexdim$. Indeed, there are no edges directly between the tiles, so the matching is induced and each vertex located between the tiles is adjacent to one of the transversal vertices on the outer cycle of a tile. We therefore have $2^k$ homology (and cohomology) classes in $\ind(\hexdim)$. 

To prove linear independence we need some notation. Suppose that the big hexagons in the grid are labeled $1,\ldots,k$ in some order. For any sequence $s=(s_1,\ldots,s_k)$ of letters $A$, $B$ let $M(s)$ and $\sigma(s)$ denote the matching and transversal obtained in the above construction by placing the tile of type $s_i$ in the $i$-th spot for $i=1,\ldots,k$. Then for any two sequences $s$ and $t$ we have
\begin{equation*}
\langle\sigma(t)^\vee, \alpha_{M(s)}\rangle=\left\{\begin{array}{ll}0 & \textrm{ if } t_i=B \textrm{ and } s_i=A \textrm{ for some } i\\ \pm 1 & \textrm{ otherwise}\end{array}\right.
\end{equation*}
Consider a $2^k\times 2^k$ matrix with rows and columns indexed by sequences in $\{A,B\}^k$ in lexicographical order, where the entry in column $t$ and row $s$ is $\langle\sigma(t)^\vee, \alpha_{M(s)}\rangle$. By the last observation that matrix has $\pm 1$ on the diagonal and $0$ above the diagonal (where $t>_{LEX}s$), so it is of full rank. Therefore all $\alpha_{M(s)}$ are linearly independent.
\end{proof}

Similar results can be obtained for some other lattices derived from the triangular lattice.

\begin{proposition}
\label{prop:other}
For $d=3,4$ let $\tri_d$ be any finite quotient of the lattice obtained by removing every $d$-th line in each direction from the triangular lattice, such that the new lattice admits a tiling as in Fig.\ref{fig:otherlattices}. Then we have
$$\redbetti(\ind(\tri_3))\geq (2^{1/8})^{v(\tri_3)}, \qquad \redbetti(\ind(\tri_4))\geq (2^{1/45})^{v(\tri_4)}.$$
\end{proposition}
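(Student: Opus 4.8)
The plan is to repeat, essentially verbatim, the argument of Proposition~\ref{prop:lowerboundh}. From Fig.~\ref{fig:otherlattices} one reads off, for each $d\in\{3,4\}$, two tile types $A$ and $B$, each drawn together with an induced matching (thick edges) and a dominating transversal of it (thick vertices); crucially the two matchings must have the same number of edges, say $m_d$, so that they can be freely interchanged. Write $k$ for the number of tiles in the given tiling of $\tri_d$. Since the tiling partitions the vertices into $k$ tiles, and since the bounds to be proved are equivalent to the tiles having $8$ and $45$ vertices respectively, we have $v(\tri_3)=8k$ and $v(\tri_4)=45k$. For a word $s\in\{A,B\}^k$ let $M(s)$ and $\sigma(s)$ be the matching and transversal obtained by placing the tile $s_i$ in the $i$-th spot, for $i=1,\ldots,k$.

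First I would check that each $M(s)$ is an induced matching in $\tri_d$ with dominating transversal $\sigma(s)$. As in Proposition~\ref{prop:lowerboundh} this has two parts: there are no edges of $\tri_d$ running between two distinct tiles (so the union of the per-tile induced matchings is again induced), and every vertex lying between the tiles is adjacent to a transversal vertex sitting on the outer boundary of some tile, while inside each tile the chosen $\sigma$ already dominates all tile-internal vertices by design. Granting this, the Lemma produces, for each of the $2^k$ words $s$, nonzero classes $\alpha_{M(s)}\in\redhom_{m_dk-1}(\ind(\tri_d))$ and $\sigma(s)^\vee\in\redhom^{m_dk-1}(\ind(\tri_d))$.

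The heart of the matter is the local compatibility of the pair $(A,B)$: within a single tile, $\sigma(A)$ is a transversal of both $M(A)$ and $M(B)$, whereas $\sigma(B)$ is a transversal of $M(B)$ but \emph{not} of $M(A)$. Since both a matching $M(s)$ and a transversal $\sigma(t)$ split as disjoint unions over the $k$ tile positions, $\sigma(t)$ is a transversal of $M(s)$ (equivalently $\langle\sigma(t)^\vee,\alpha_{M(s)}\rangle\neq 0$) if and only if $\sigma(t_i)$ is a transversal of $M(s_i)$ for every $i$, which by the compatibility statement fails precisely when $t_i=B$ and $s_i=A$ for some $i$. This reproduces the displayed identity in the proof of Proposition~\ref{prop:lowerboundh}, so the $2^k\times 2^k$ matrix with entry $\langle\sigma(t)^\vee,\alpha_{M(s)}\rangle$ in row $s$, column $t$, rows and columns ordered lexicographically on $\{A,B\}^k$ with $A<B$, is triangular with $\pm 1$ on the diagonal, hence of full rank. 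Therefore the $\alpha_{M(s)}$ are linearly independent and $\redbetti(\ind(\tri_d))\geq 2^k$, which equals $(2^{1/8})^{v(\tri_3)}$ and $(2^{1/45})^{v(\tri_4)}$ in the two cases.

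The only genuine work — and the step most likely to hide a subtlety — is the finite, explicit verification carried out inside one tile and across tile boundaries using Fig.~\ref{fig:otherlattices}: that $A$ and $B$ are bona fide induced subgraphs carrying the indicated matchings and dominating transversals of equal matching size, that the transversals dominate the inter-tile vertices, and above all that having $s_i=A$ together with $t_i=B$ makes the pairing $\langle\sigma(t)^\vee,\alpha_{M(s)}\rangle$ vanish. For $\tri_4$ the tile has $45$ vertices, so this bookkeeping is the more laborious of the two, but it is entirely routine and requires no idea beyond Proposition~\ref{prop:lowerboundh}.
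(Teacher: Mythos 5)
Your proposal is correct and follows exactly the route the paper indicates: the paper's own proof simply invokes the argument of Proposition~\ref{prop:lowerboundh} with the tiles of Fig.~\ref{fig:otherlattices}, which is precisely what you spell out. The one point you flag as needing verification — that the tiles carry equal-size induced matchings with dominating transversals and that the $(s_i=A,\,t_i=B)$ case makes the pairing vanish — is indeed the finite check the paper leaves to the reader via the figure.
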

\begin{proof}
The proof is similar: $\tri_3$ has $k=v(\tri_3)/8$ tiles and $\tri_4$ has $k=v(\tri_4)/45$ of them. In each case we construct $2^k$ induced matchings with dominating transversals using the tiles $A$, $B$ of appropriate type (Fig.\ref{fig:otherlattices}) and prove linear independence of the resulting homology classes as before.
\end{proof}

\begin{center}
\begin{figure}
\begin{tabular}{cc}
\includegraphics[scale=0.9]{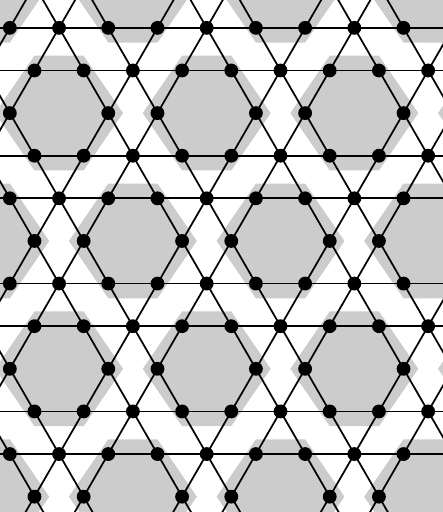} & \includegraphics[scale=0.7]{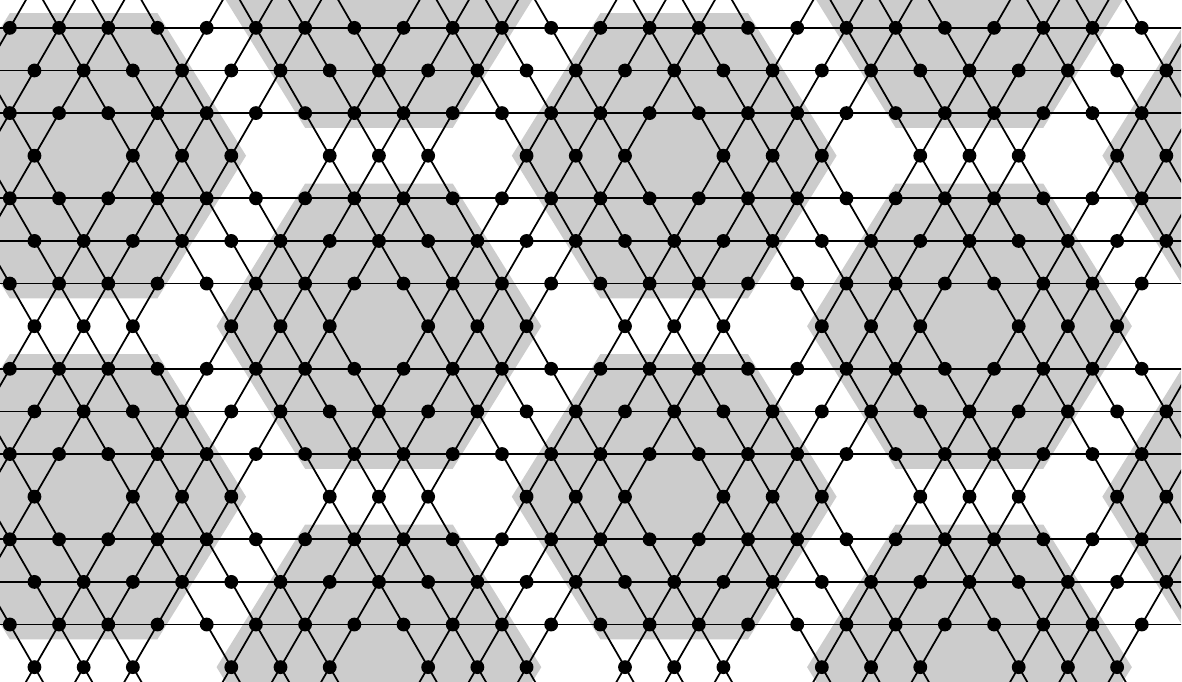} \\
\includegraphics[scale=1.1]{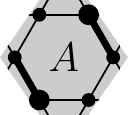} \includegraphics[scale=1.1]{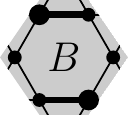} & \includegraphics[scale=0.7]{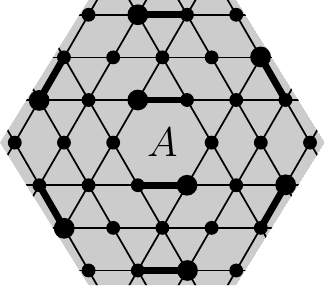} \includegraphics[scale=0.7]{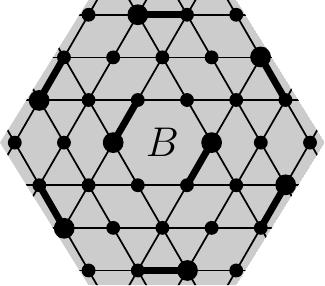}
\end{tabular}
\caption{Two other lattices derived from the triangular lattice and their sets of tiles.}
\label{fig:otherlattices}
\end{figure}
\end{center}

\section{Homotopical splittings}
\label{sect:homotopical}

We will now show for completeness that induced matchings with dominating transversals in $G$ correspond to sphere wedge summands in the homotopy type of $\ind(G)$. This is stronger than just saying that they define a nonzero homology class. The results of this section are not needed in the previous calculations, where analyzing the cohomology-homology pairing is sufficient (and easier).


\begin{lemma}
\label{lem:retract}
Let $(M,\sigma)$ be an induced matching of size $k$ with a dominating transversal in $G$. Denote by $j:\ind(M)\incl\ind(G)$ the embedding induced by the inclusion $i:M\incl G$. Then there is a homotopy equivalence
$$\ind(G)\htpyequiv \ind(M) \vee C(j)$$
where $C(j)$ is the homotopy cofibre of $j$.
\end{lemma}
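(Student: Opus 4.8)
The starting point is that the dominating hypothesis is precisely what makes $\sigma$ a \emph{facet} of $\ind(G)$: by the observation recorded right after the definition, a transversal is dominating if and only if it is a maximal independent set, i.e.\ a maximal face of $\ind(G)$. Being a maximal face, $\sigma$ may be deleted to leave a simplicial complex $B:=\ind(G)\setminus\{\sigma\}$, and clearly $\ind(G)=\ind(M)\cup B$. Since $\sigma\in\ind(M)$ (it is a transversal of $M$), we have $\ind(M)\cap B=\ind(M)\setminus\{\sigma\}$. I would then prove $\ind(G)\htpyequiv\ind(M)\vee C(j)$ by showing that this intersection is contractible and feeding that into a pushout argument.

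For the contractibility claim: $\ind(M)=\{v_1,w_1\}\ast\cdots\ast\{v_k,w_k\}$ is a triangulation of $S^{k-1}$ having $\sigma=\{v_1,\dots,v_k\}$ among its $2^k$ facets, so $\ind(M)\setminus\{\sigma\}$ is a triangulated sphere with a single open top cell removed, and should be a ball. I would make this precise with the nerve lemma, covering $\ind(M)\setminus\{\sigma\}$ by the subcomplexes $U_i=\{F\in\ind(M):v_i\notin F\}$ for $i=1,\dots,k$; their union is $\{F\in\ind(M):\sigma\not\subseteq F\}=\ind(M)\setminus\{\sigma\}$. Each $U_i$ is a cone with apex $w_i$, and for every nonempty $S\subseteq\{1,\dots,k\}$ the intersection $\bigcap_{i\in S}U_i=\{F:v_i\notin F\text{ for all }i\in S\}$ is a cone with apex $w_j$ for any $j\in S$, hence contractible. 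So $\ind(M)\setminus\{\sigma\}$ is homotopy equivalent to the nerve of this cover, a full $(k-1)$-simplex, hence contractible.

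With that in hand the rest is formal. The maps $\ind(M)\cap B\incl\ind(M)$ and $\ind(M)\cap B\incl B$ are inclusions of subcomplexes (cofibrations), so $\ind(G)$ is the homotopy pushout of $\ind(M)\hookleftarrow\ind(M)\cap B\hookrightarrow B$; since the amalgamating subcomplex is contractible, the pushout is homotopy equivalent to the wedge $\ind(M)\vee B$, and naturality identifies $j$ with the inclusion of the wedge summand $\ind(M)$. Since $j$ is a cofibration, $C(j)\htpyequiv\ind(G)/\ind(M)$, and collapsing $\ind(M)$ in the pushout yields $B/(\ind(M)\cap B)$, which is homotopy equivalent to $B$ because $\ind(M)\cap B$ is a contractible subcomplex. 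Combining, $\ind(G)\htpyequiv\ind(M)\vee B\htpyequiv\ind(M)\vee C(j)$.

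The one step carrying real content is the contractibility of the punctured sphere $\ind(M)\setminus\{\sigma\}$; everything else is routine manipulation of pushouts and quotients. I would also warn against a natural-looking shortcut: the dominating transversal does yield an explicit simplicial retraction $\ind(G)\to\ind(M)$, but the ``a co-$H$-space retract of $Y$ splits off as a wedge summand'' principle one would want to apply is false in general (for instance $S^1$ is a cofibration retract of the torus but is not a wedge summand of it), so the facet-deletion argument above is the safe route.
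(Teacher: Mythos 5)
Your proof is correct and rests on the same two pillars as the paper's: deleting the facet $\sigma$, and the contractibility of $\ind(M)\setminus\{\sigma\}$. The paper packages this via the cofibre sequence $\partial\sigma\incl\ind(G)\setminus\mathrm{int}\,\sigma\incl\ind(G)$ (observing that the first map is null-homotopic because it factors through the disk $\ind(M)\setminus\mathrm{int}\,\sigma\cong D^{k-1}$), whereas you package it as the homotopy pushout $\ind(M)\hookleftarrow\ind(M)\cap B\hookrightarrow B$ with contractible amalgam; these are interchangeable, and the second half of the argument (quotienting by the contractible subcomplex to identify $B\htpyequiv\ind(G)/\ind(M)\htpyequiv C(j)$) is literally the same in both. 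Your nerve-lemma verification that the punctured cross-polytope boundary is contractible makes explicit something the paper takes as given; your closing caveat about co-$H$-space retracts not splitting off in general is a sound warning.
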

\begin{proof}
Since $\sigma$ is a maximal face of $\ind(G)$ we can remove it and form the simplicial complex $\ind(G)\setminus \inter \sigma$ whose geometric realization is obtained by removing the interior of $\sigma$. Then we have a cofibre sequence
$$\partial\sigma\incl \ind(G)\setminus \inter \sigma \incl \ind(G)\to \susp\,(\partial\sigma)\to\cdots$$
i.e. $\ind(G)$ is the homotopy cofibre of the inclusion $\partial\sigma\incl (\ind(G)\setminus \inter \sigma)$. But this inclusion is null-homotopic since it factors through the space $\ind(M)\setminus \sigma$, homeomorphic to the disk $D^{k-1}$. As a consequence
$$\ind(G)\htpyequiv (\ind(G)\setminus \inter \sigma)\vee\susp\,(\partial\sigma)=(\ind(G)\setminus \inter \sigma)\vee\susp\,S^{k-2}=(\ind(G)\setminus \inter \sigma)\vee S^{k-1}.$$
Now the fact that $\ind(M)\setminus\inter \sigma$ is a contractible subcomplex of $\ind(G)\setminus \inter \sigma$ implies that
$$\ind(G)\setminus \inter \sigma \htpyequiv (\ind(G)\setminus \inter \sigma)/(\ind(M)\setminus \inter\sigma)=\ind(G)/\ind(M)\htpyequiv C(j).$$
Together with $S^{k-1}=\ind(M)$ this ends the proof.

\end{proof}


To analyze our situation further we need to understand homotopy cofibres of maps between independence complexes.

\begin{lemma}
\label{lem:cofibre}
Suppose $i:H\incl G$ is an inclusion of an induced subgraph. Let $C(i)$ denote the graph obtained by adding to $G$ a new vertex adjacent to all the vertices of $V(G)\setminus V(H)$. Then $\ind(C(i))$ is the homotopy cofibre of the induced inclusion $\ind(H)\incl\ind(G)$, i.e. there is a cofibre sequence
$$\ind(H)\incl\ind(G)\incl\ind(C(i))\to\susp\,\ind(H)\to\cdots.$$
\end{lemma}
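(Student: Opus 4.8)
The plan is to identify $\ind(C(i))$ explicitly with the standard mapping cone construction. Let $w$ be the new vertex of $C(i)$, adjacent exactly to the vertices of $V(G)\setminus V(H)$. The key structural observation is that a set $\tau\subseteq V(C(i))$ is independent in $C(i)$ if and only if either $w\notin\tau$ and $\tau$ is independent in $G$, or $w\in\tau$ and $\tau\setminus\{w\}$ is an independent set of $G$ contained entirely in $V(H)$, i.e. a face of $\ind(H)$. In other words, $\ind(C(i))=\ind(G)\cup (w\ast\ind(H))$, where the two pieces are glued along their common subcomplex $\ind(H)\subseteq\ind(G)$ (here $\ind(H)$ sits inside $w\ast\ind(H)$ as the base of the cone). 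Since $w\ast\ind(H)$ is a cone on $\ind(H)$, it is contractible, and this presentation of $\ind(C(i))$ is precisely the mapping cone of the inclusion $\ind(H)\incl\ind(G)$.

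The first step I would carry out is the combinatorial identity above: verify both inclusions of the claimed description of the faces of $\ind(C(i))$. This is immediate from the definition of adjacency in $C(i)$ — a vertex $v\in V(H)$ is not adjacent to $w$, while a vertex $v\in V(G)\setminus V(H)$ is, and adjacencies among the old vertices are unchanged since $H$ and $G$ are fixed and $i$ is an induced inclusion. The second step is to recognize that attaching the cone $w\ast\ind(H)$ to $\ind(G)$ along $\ind(H)$ is, up to homeomorphism, the mapping cone $C(j)$ of $j:\ind(H)\incl\ind(G)$: the subcomplex $w\ast\ind(H)$ is a geometric cone with apex $w$ and base $\ind(H)$, so $\ind(C(i))=\ind(G)\cup_{\ind(H)}\mathrm{Cone}(\ind(H))$, which is the definition of the homotopy cofibre for a cofibration such as the inclusion of a subcomplex. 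The third step is then to read off the Puppe sequence: for any cofibration $A\incl X$ with cofibre $X/A\htpyequiv C$, one has the coexact sequence $A\incl X\incl C\to\susp A\to\susp X\to\cdots$, and here this specializes to $\ind(H)\incl\ind(G)\incl\ind(C(i))\to\susp\,\ind(H)\to\cdots$.

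I do not anticipate a serious obstacle; the only point requiring a little care is the degenerate cases, which I would check separately. If $V(H)=V(G)$ then $w$ is isolated in $C(i)$, so $\ind(C(i))=\ind(G)\ast\{w\}$ is a cone, hence contractible; this matches the cofibre of the identity (or of any surjective-on-vertices inclusion), for which $X/A$ is a point. If $H=\emptyset$, then $w$ is adjacent to every vertex of $G$, so $\ind(C(i))=\ind(G)\sqcup\{w\}$ up to the empty face, which has the homotopy type of $\susp\,\ind(G)/\ind(G)$... more precisely it is $\ind(G)$ with a disjoint basepoint, whose reduced homology is that of $\ind(G)_+$; and indeed the cofibre of $\emptyset\incl\ind(G)$ is $\ind(G)$ with a disjoint point, consistent with $\susp\,\ind(\emptyset)=\susp\,S^{-1}=S^0$. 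With these checks, the general argument via the explicit face description goes through verbatim.
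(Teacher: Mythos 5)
Your proof is correct and is essentially the same as the paper's: you identify $\ind(C(i))$ combinatorially as $\ind(G)$ with the cone $w\ast\ind(H)$ attached along $\ind(H)$, which is precisely the mapping cone construction the paper invokes (you simply spell out the face-level verification and the degenerate cases, while the paper states the identification in one sentence).
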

\begin{proof}
By definition the homotopy cofibre of $\ind(H)\incl\ind(G)$ is obtained by attaching to $\ind(G)$ a cone over the subspace $\ind(H)$. The space $\ind(C(i))$ is obtained in precisely the same way.
\end{proof}

Now we can prove the main result of this section.
\begin{proposition}
\label{prop:many}
Suppose $(M_i,\sigma_i)$, for $i=1,\ldots,k$, is a sequence of induced matchings with dominating transversals in a graph $G$, such that for every $i<j$ we have
\begin{equation}
\label{eq:seq}
\tag{*}
\sigma_j\setminus V(M_i)\neq\emptyset.
\end{equation}
Then there is a homotopy equivalence
$$\ind(G)\htpyequiv\Big(\bigvee_{i=1}^k\ind(M_i)\Big)\vee X$$
for some space $X$.
\end{proposition}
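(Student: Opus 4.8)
The plan is to argue by induction on $k$, peeling off the matchings in the order $M_1,M_2,\ldots$ The case $k=0$ is vacuous (take $X=\ind(G)$) and the case $k=1$ is precisely Lemma~\ref{lem:retract}. So assume $k\geq 2$ and that the statement holds for all graphs equipped with such a sequence of length $k-1$.

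First I would apply Lemma~\ref{lem:retract} to the pair $(M_1,\sigma_1)$, obtaining a homotopy equivalence $\ind(G)\htpyequiv\ind(M_1)\vee C(j_1)$, where $j_1\colon\ind(M_1)\incl\ind(G)$ is induced by the inclusion of the induced subgraph $M_1\incl G$. By Lemma~\ref{lem:cofibre} we may identify $C(j_1)\htpyequiv\ind(G')$, where $G'$ is the graph obtained from $G$ by adjoining a single new vertex $w$ adjacent to all vertices of $V(G)\setminus V(M_1)$.

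Next I would verify that $(M_2,\sigma_2),\ldots,(M_k,\sigma_k)$ are still induced matchings with dominating transversals in $G'$ and still satisfy~\eqref{eq:seq}. For $i\geq 2$, the subgraph $M_i$ remains induced in $G'$ since every new edge is incident to $w\notin V(M_i)$, and for the same reason $\sigma_i$ remains an independent transversal of $M_i$. The only genuine thing to check is that $\sigma_i$ still dominates $G'$, i.e.\ that $w$ is adjacent to some vertex of $\sigma_i$; by construction of $G'$ this holds exactly when $\sigma_i$ meets $V(G)\setminus V(M_1)$, i.e.\ when $\sigma_i\setminus V(M_1)\neq\emptyset$, which is exactly~\eqref{eq:seq} for the pair $1<i$. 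Condition~\eqref{eq:seq} for the truncated sequence (indices $2\leq i<j\leq k$) is inherited verbatim from the original. Hence the inductive hypothesis applies to $G'$ with the $k-1$ pairs $(M_2,\sigma_2),\ldots,(M_k,\sigma_k)$, giving $\ind(G')\htpyequiv\big(\bigvee_{i=2}^k\ind(M_i)\big)\vee X'$ for some space $X'$; substituting this into $\ind(G)\htpyequiv\ind(M_1)\vee\ind(G')$ yields the claim with $X=X'$.

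I expect the only step requiring real thought to be the one just described: checking that forming $G'$ preserves the \emph{domination} property of the later transversals — this is where, and the only place where, hypothesis~\eqref{eq:seq} enters. The remaining ingredients are formal: $\ind(M_i)$ is the same abstract complex regardless of the ambient graph; Lemma~\ref{lem:cofibre} is applicable because an induced matching is by definition an induced subgraph; and a wedge summand may be substituted inside another wedge up to homotopy equivalence.
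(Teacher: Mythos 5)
Your proof is correct and follows essentially the same inductive route as the paper: split off $\ind(M_1)$ via Lemma~\ref{lem:retract}, identify the cofibre with $\ind(G')$ via Lemma~\ref{lem:cofibre}, and observe that condition~\eqref{eq:seq} is precisely what ensures the remaining transversals still dominate the new vertex. The only (harmless) cosmetic difference is that you make the $k=0$ base case explicit, whereas the paper starts the induction at $k=1$.
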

\begin{proof}
Let $i_1:M_1\incl G$ be the inclusion of the first matching. Then, by Lemmas \ref{lem:retract} and \ref{lem:cofibre} we have a splitting 
$$\ind(G)\htpyequiv\ind(M_1)\vee\ind(C(i_1)).$$
If $k=1$ then we are done. Otherwise note that the pairs $(M_j,\sigma_j)$ for $2\leq j\leq k$ define a sequence of induced matchings with dominating transversals in the graph $C(i_1)$. Indeed, to build $C(i_1)$ we did not add any edges within $G$ itself, so the matchings are still induced. Moreover, by (\ref{eq:seq}) every set $\sigma_j$ contains a vertex which is not in $V(M_1)$, hence it is adjacent to the new vertex of $C(i_1)$, which means $\sigma_j$ is a dominating set in $C(i_1)$. The condition (\ref{eq:seq}) still holds, so by induction
$$\ind(C(i_1))\htpyequiv\bigvee_{j=2}^k\ind(M_j)\vee X$$
and that completes the proof.
\end{proof}

\begin{remark}
The last result applies to all the situations of the previous section, once one orders the matchings in the lexicographical order of their defining $\{A,B\}$-words. In particular we get a splitting of $\ind(\hexdim)$ which includes a wedge sum of exponentially many spheres.
\end{remark}

\section{Upper bounds}
\label{sect:upperbounds}

In this section we improve, for the hexagonal dimer grids $\hexdim$, the upper bound of \cite{EngWitten}. We develop a more general result, which is modeled entirely on the technique of \cite{EngWitten} with just two small improvements. Firstly, it avoids discrete Morse theory and relies just on the Betti numbers and homotopy. Secondly, it allows arbitrary graphs in a place where \cite{EngWitten} requires a forest (see Cor.\ref{cor:forest}).

We first need some extra notation. Suppose $K$ is a simplicial complex with a fixed splitting of the vertex set into two disjoint subsets $U$ and $W$ ($V(K)=U\sqcup W$). For every simplex $\sigma\in K[U]$ we denote by $\lk_W\sigma$ the subcomplex of $K[W]$ consisting of those simplices $\tau\in K[W]$ for which $\tau\sqcup\sigma\in K$. By $\st_W\sigma$ we denote the subcomplex of $K$ consisting of those $\tau\in K$ for which $\tau\cap U\subset \sigma$ and $\tau\cap W\in\lk_W\sigma$. Clearly $\st_W\sigma=(\lk_W\sigma)\ast\sigma$.

\begin{example}
If $\sigma=\emptyset$ then we always have $\lk_W\emptyset=\st_W\emptyset=K[W]$. If $U=\{v\}$ then the complexes $\lk_Wv$ and $\st_Wv$ coincide with the usual link and star of $v$ in $K$. The star $\st_W\sigma$ is always contractible when $\sigma\neq\emptyset$.
\end{example}

\begin{lemma}
\label{lem:betti}
Suppose $K$ is a simplicial complex with a vertex partition $U\sqcup W$ as above and such that for every $\sigma\in K[U]$ we have $\redbetti(\lk_W\sigma)\leq B$. Then
$$\redbetti(K)\leq B\cdot|K[U]|.$$
\end{lemma}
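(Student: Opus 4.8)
The plan is to build a filtration of $K$ indexed by the faces of $K[U]$ and estimate how much the total Betti number can grow at each step. Order the simplices of $K[U]$ as $\sigma_1,\sigma_2,\ldots,\sigma_N$ (where $N=|K[U]|$, including $\sigma_1=\emptyset$) so that each $\sigma_j$ comes after all its proper subsets; for instance take any linear extension of the face poset refined by dimension. Define $K_j=\bigcup_{l\le j}\st_W\sigma_l$, so that $K_0=\emptyset$, $K_N=K$ (every simplex $\tau\in K$ lies in $\st_W\sigma$ for $\sigma=\tau\cap U$), and $K_{j-1}\subset K_j$ for all $j$. The point of the ordering is that $K_j=K_{j-1}\cup\st_W\sigma_j$ and the intersection $K_{j-1}\cap\st_W\sigma_j$ is exactly the subcomplex of $\st_W\sigma_j$ consisting of faces $\tau$ with $\tau\cap U\subsetneq\sigma_j$; this is the ``boundary part'' $(\lk_W\sigma_j)\ast\partial\sigma_j$ when $\sigma_j\neq\emptyset$, and it is empty when $\sigma_j=\emptyset$.

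Next I would run the Mayer--Vietoris sequence for $K_j=K_{j-1}\cup\st_W\sigma_j$. Since $\st_W\sigma_j=(\lk_W\sigma_j)\ast\sigma_j$ is contractible for $\sigma_j\neq\emptyset$ (and for $\sigma_j=\emptyset$ it equals $K[W]$ with $\redbetti\le B$ by hypothesis), the long exact sequence
\[
\cdots\to\redhom_i(K_{j-1}\cap\st_W\sigma_j)\to\redhom_i(K_{j-1})\oplus\redhom_i(\st_W\sigma_j)\to\redhom_i(K_j)\to\cdots
\]
gives $\redbetti(K_j)\le\redbetti(K_{j-1})+\redbetti(K_{j-1}\cap\st_W\sigma_j)+\redbetti(\st_W\sigma_j)$. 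The last term is $0$ for $j\ge 2$. For the middle term, when $\sigma_j\neq\emptyset$ we have $K_{j-1}\cap\st_W\sigma_j=(\lk_W\sigma_j)\ast\partial\sigma_j$, and since $\partial\sigma_j\simeq S^{\dim\sigma_j-1}$ is a sphere, joining with it is a suspension up to homotopy, so $\redbetti\big((\lk_W\sigma_j)\ast\partial\sigma_j\big)=\redbetti(\lk_W\sigma_j)\le B$. Thus each step with $j\ge 2$ contributes at most $B$, and the initial step contributes $\redbetti(K_1)=\redbetti(K[W])=\redbetti(\lk_W\emptyset)\le B$. Summing over the $N$ steps yields $\redbetti(K)=\redbetti(K_N)\le N\cdot B=B\cdot|K[U]|$.

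The one point that needs care — and I expect it to be the main technical obstacle — is verifying the identification $K_{j-1}\cap\st_W\sigma_j=(\lk_W\sigma_j)\ast\partial\sigma_j$, which relies on having chosen the ordering so that all proper faces of $\sigma_j$ already appear among $\sigma_1,\ldots,\sigma_{j-1}$. A face $\tau$ lies in this intersection iff $\tau\in\st_W\sigma_j$ (so $\tau\cap U\subset\sigma_j$ and $\tau\cap W\in\lk_W\sigma_j$) and $\tau\in\st_W\sigma_l$ for some $l<j$; the latter forces $\tau\cap U\subset\sigma_l\neq\sigma_j$, hence $\tau\cap U\subsetneq\sigma_j$, i.e. $\tau\cap U\in\partial\sigma_j$. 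Conversely, if $\tau\cap U\in\partial\sigma_j$ then $\tau\cap U$ is one of the earlier $\sigma_l$'s and $\tau\cap W\in\lk_W\sigma_j\subset\lk_W(\tau\cap U)$, so $\tau\in\st_W\sigma_l\subset K_{j-1}$. Once this bookkeeping is in place, the rest is the routine Mayer--Vietoris estimate together with the suspension identity $\redbetti(L\ast\partial\sigma)=\redbetti(L)$ for $\sigma\neq\emptyset$.
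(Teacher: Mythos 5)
Your proof is correct and uses essentially the same filtration as the paper: you order the faces of $K[U]$ compatibly with inclusion and set $K_j=\bigcup_{l\le j}\st_W\sigma_l$, which is exactly the filtration $F_0\subset F_1\subset\cdots$ used there. The one genuine difference is the mechanism for extracting the numerical bound. The paper passes to the quotients $F_l/F_{l-1}\htpyequiv\susp^{\dim\sigma_l+1}\lk_W\sigma_l$ and invokes the homology spectral sequence of the filtration, bounding $\redbetti(K)$ by the total dimension of the $E_1$ page. You instead work with the intersections $K_{j-1}\cap\st_W\sigma_j=(\lk_W\sigma_j)\ast\partial\sigma_j$ and run Mayer--Vietoris at each stage. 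The two are essentially the same information packaged differently (the cofiber $\st_W\sigma_j/(\lk_W\sigma_j\ast\partial\sigma_j)\htpyequiv\susp^{\dim\sigma_j+1}\lk_W\sigma_j$ is precisely what the paper suspends), but your version avoids spectral sequences entirely and is thus a bit more elementary, at the mild cost of having to spell out that $\st_W\sigma_j$ is contractible for $\sigma_j\neq\emptyset$ and that joining with $\partial\sigma_j$ preserves the total Betti number. Your bookkeeping of the intersection (in particular, that the ordering forces $\tau\cap U\subsetneq\sigma_j$) is the right technical point and you verify it correctly.
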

\begin{proof}
Denote $D=|K[U]|$.
Fix any ordering $\emptyset=\sigma_0,\sigma_1,\ldots,\sigma_{D-1}$ of the simplices in $K[U]$ such that every simplex is preceded by all its faces. For $0\leq l< D$ define subcomplexes
$$F_l=\bigcup_{i=0}^l \st_W\sigma_i.$$
Then $K[W]=F_0\subset F_1\subset\cdots\subset F_{D-1}=K$ is an increasing, exhaustive filtration of $K$ with quotients:
\begin{equation*}
F_{l}/F_{l-1}=\st_W\sigma_l/(\st_W\sigma_l\cap F_{l-1}) \htpyequiv \susp^{\dim\sigma_l+1}\,\lk_W\sigma_l.
\end{equation*}
Since each of those quotients has total Betti number at most $B$, the $E_1$ page of the homology spectral sequence associated with this filtration has dimension at most $B\cdot D$. The spectral sequence converges to $\redhom_*(K)$, hence the result.
\end{proof}

\begin{lemma}
\label{lem:upperbound}
Suppose $U\subset V(G)$ is a vertex set with the property: For every simplex $\sigma\in \ind(G[U])$ the total Betti number of
$$\ind(G\setminus(U\cup N[\sigma]))$$
is at most $B$. Then
$$\redbetti(\ind(G))\leq B\cdot |\ind(G[U])| \leq B\cdot 2^{|U|}.$$
\end{lemma}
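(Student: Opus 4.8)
The plan is to derive this directly from Lemma~\ref{lem:betti} by applying it to the complex $K=\ind(G)$ with the vertex partition $V(G)=U\sqcup W$, where $W=V(G)\setminus U$. With this choice $K[U]$ is exactly the induced subcomplex $\ind(G)[U]=\ind(G[U])$, so $|K[U]|=|\ind(G[U])|$ and the conclusion of Lemma~\ref{lem:betti} becomes $\redbetti(\ind(G))\leq B\cdot|\ind(G[U])|$, which is the first asserted inequality. It therefore only remains to check that the hypothesis of Lemma~\ref{lem:betti} translates into the hypothesis stated here.

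The key (and only substantive) step is to identify $\lk_W\sigma$ for a face $\sigma\in K[U]=\ind(G[U])$. By definition $\lk_W\sigma$ consists of the faces $\tau\in\ind(G)[W]=\ind(G[W])$ with $\sigma\sqcup\tau\in\ind(G)$. Since $\sigma\subseteq U$ and $\tau\subseteq W$ are automatically disjoint, and both are already independent in $G$, the only extra requirement for $\sigma\sqcup\tau$ to be independent in $G$ is that no vertex of $\tau$ be adjacent to a vertex of $\sigma$, i.e. $\tau\cap N[\sigma]=\emptyset$. Equivalently, $\tau$ ranges over the independent sets of $G$ contained in $W\setminus N[\sigma]=V(G)\setminus(U\cup N[\sigma])$, so
$$\lk_W\sigma=\ind\bigl(G\setminus(U\cup N[\sigma])\bigr).$$
Thus the assumption ``$\redbetti\bigl(\ind(G\setminus(U\cup N[\sigma]))\bigr)\leq B$ for every $\sigma\in\ind(G[U])$'' is precisely the assumption ``$\redbetti(\lk_W\sigma)\leq B$ for every $\sigma\in K[U]$'' of Lemma~\ref{lem:betti}, and that lemma yields $\redbetti(\ind(G))\leq B\cdot|\ind(G[U])|$.

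For the final inequality $|\ind(G[U])|\leq 2^{|U|}$ one only observes that every face of a simplicial complex on vertex set $U$ is a subset of $U$, and the empty face is the subset $\emptyset$, so the total number of faces is at most the number of subsets of $U$, namely $2^{|U|}$. I do not expect a genuine obstacle here: the entire content is the bookkeeping identification of $\lk_W\sigma$ with the independence complex of the stated deletion, after which Lemma~\ref{lem:betti} and the trivial face count finish the argument.
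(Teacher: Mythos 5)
Your proof is correct and is essentially identical to the paper's: both apply Lemma~\ref{lem:betti} to $K=\ind(G)$ with the partition $U\sqcup(V(G)\setminus U)$ and observe that $\lk_W\sigma=\ind(G\setminus(U\cup N[\sigma]))$. You merely spell out in a little more detail the routine verification of this identification and the trivial bound $|\ind(G[U])|\leq 2^{|U|}$, which the paper states without comment.
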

\begin{proof}
Consider $K=\ind(G)$ with vertex set partitioned into $U$ and $W=V(G)\setminus U$. Then for every simplex $\sigma\in \ind(G[U])$ we have precisely
$$\lk_W\sigma=\ind(G\setminus(U\cup N[\sigma])),$$
so Lemma \ref{lem:betti} applies.
\end{proof}

\begin{corollary}[\cite{EngWitten}]
\label{cor:forest}
If $U\subset V(G)$ is a vertex set such that $G\setminus U$ is a forest then $\redbetti(\ind(G))\leq |\ind(G[U])|$.
\end{corollary}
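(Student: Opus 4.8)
The plan is to apply Lemma~\ref{lem:upperbound} to the given set $U$ with $B=1$. For this I need to check the hypothesis of that lemma: for every simplex $\sigma\in\ind(G[U])$ the complex $\ind(G\setminus(U\cup N[\sigma]))$ has total Betti number at most $1$. Now $G\setminus(U\cup N[\sigma])$ is an induced subgraph of $G\setminus U$, which is a forest by assumption, and an induced subgraph of a forest is again a forest. So everything reduces to the claim that $\redbetti(\ind(T))\le 1$ for every forest $T$. Granting this, Lemma~\ref{lem:upperbound} yields $\redbetti(\ind(G))\le 1\cdot|\ind(G[U])|=|\ind(G[U])|$, which is exactly the assertion.

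It remains to prove the claim, and I would do this by induction on $v(T)$. If $T$ has no edges, then $\ind(T)$ is a full simplex when $V(T)\neq\emptyset$ and the complex $\emptyset=S^{-1}$ when $V(T)=\emptyset$; in either case $\redbetti(\ind(T))\le 1$. If $T$ has an edge, choose a connected component containing an edge: it is a tree on at least two vertices, hence has a leaf $v$ with a unique neighbour $w$. The key step is the standard leaf-removal identity
$$\ind(T)\htpyequiv\susp\,\ind(T\setminus N[w]).$$
I would justify it by writing $\ind(T)=\ind(T\setminus w)\cup\st_{\ind(T)}w$: the first piece is a cone with apex $v$ (since $v$ becomes isolated in $T\setminus w$), the second piece is the cone $w\ast\ind(T\setminus N[w])$, and their intersection is precisely $\ind(T\setminus N[w])$; a union of two contractible complexes glued along a common subcomplex is homotopy equivalent to the suspension of that subcomplex. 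Since $\redbetti(\susp\,K)=\redbetti(K)$ and $T\setminus N[w]$ is a forest on strictly fewer vertices (at least $v$ and $w$ have been removed), the induction hypothesis gives $\redbetti(\ind(T))=\redbetti(\ind(T\setminus N[w]))\le 1$.

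The only substantive ingredient is the leaf-removal identity together with its homotopy-theoretic justification; after that the corollary is a one-line consequence of the induction feeding into Lemma~\ref{lem:upperbound}. I do not expect a real obstacle here: the possible pitfalls are purely bookkeeping with degenerate cases (the empty graph, isolated vertices, disconnected forests), and these are all absorbed into the base case of the induction. In fact the same argument shows a bit more, namely that $\ind(T)$ is either contractible or homotopy equivalent to a single sphere, but only the Betti-number estimate is needed for this corollary.
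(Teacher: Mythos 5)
Your proof is correct and follows the same route as the paper: invoke Lemma~\ref{lem:upperbound} with $B=1$ after observing that $G\setminus(U\cup N[\sigma])$ is an induced subgraph of the forest $G\setminus U$ and hence a forest. The only difference is that the paper simply cites Ehrenborg--Hetyei \cite{EH} for the fact that the independence complex of a forest is contractible or a single sphere, whereas you reprove (the weaker Betti-number version of) that fact from scratch via the leaf-removal suspension identity; your argument for that identity, and the bookkeeping of the base cases including the empty forest, is sound.
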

\begin{proof}
If $G\setminus U$ is a forest then so is $G\setminus(U\cup N[\sigma])$ for every $\sigma$. Since by \cite{EH} the independence complex of a forest is either contractible or homotopy equivalent to a sphere (possibly $S^{-1}$), we can apply Lemma \ref{lem:upperbound} with $B=1$.
\end{proof}

\begin{fact}
\label{fact:bettijoin}
For any two topological spaces $X$ and $Y$
$$\redbetti(X\ast Y)=\redbetti(X)\redbetti(Y).$$
\end{fact}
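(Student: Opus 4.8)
The plan is to deduce the identity from the Künneth theorem over $\rat$, using the classical description of the (augmented) chain complex of a join as a tensor product shifted by one degree.

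First I would set up the chain-level statement for simplicial complexes, which is the case of interest (independence complexes); the topological version follows in the same way from the homotopy equivalence $X\ast Y\htpyequiv\susp(X\wedge Y)$. Writing $\widehat{C}_\bullet(K)$ for the augmented chain complex of $K$ over $\rat$, with $\widehat{C}_{-1}(K)=\rat$ spanned by the empty face, a face of $K\ast L$ is a disjoint union $\sigma\sqcup\tau$ with $\sigma\in K$, $\tau\in L$ (either possibly empty), of dimension $\dim\sigma+\dim\tau+1$. The assignment $\sigma\sqcup\tau\mapsto\sigma\otimes\tau$ then gives an isomorphism of chain complexes
$$\widehat{C}_n(K\ast L)\ \cong\ \bigoplus_{p+q=n-1}\widehat{C}_p(K)\otimes_\rat\widehat{C}_q(L),$$
intertwining the simplicial boundary with the tensor-product differential up to the standard Koszul signs (see e.g. \cite{Book}). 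In other words $\widehat{C}_\bullet(K\ast L)$ is the tensor product $\widehat{C}_\bullet(K)\otimes_\rat\widehat{C}_\bullet(L)$ shifted up by one degree.

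Next I would apply the Künneth theorem with coefficients in the field $\rat$. Since there are no Tor terms, taking homology commutes with $\otimes_\rat$, and accounting for the degree shift we get
$$\redhom_n(K\ast L)\ \cong\ \bigoplus_{p+q=n-1}\redhom_p(K)\otimes_\rat\redhom_q(L).$$
This is consistent with the conventions of Section \ref{sect:intro}: taking $K=\emptyset=S^{-1}$ gives $K\ast L=L$ and the formula reduces to $\redhom_n(L)=\redhom_n(L)$ via the unique summand with $p=-1$, while $K\ast S^0=\susp K$ recovers $\redhom_n(\susp K)=\redhom_{n-1}(K)$.

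Finally I would take dimensions and sum over all $n$. Using $\dim(A\otimes_\rat B)=\dim A\cdot\dim B$ and reindexing the resulting double sum,
$$\redbetti(K\ast L)=\sum_n\dim\redhom_n(K\ast L)=\sum_n\sum_{p+q=n-1}\redbetti_p(K)\,\redbetti_q(L)=\Big(\sum_p\redbetti_p(K)\Big)\Big(\sum_q\redbetti_q(L)\Big)=\redbetti(K)\redbetti(L).$$
There is no serious obstacle here; the only point requiring care is the single degree shift in the chain-level isomorphism (equivalently, the suspension in $X\ast Y\htpyequiv\susp(X\wedge Y)$) and its compatibility with the $S^{-1}$ convention for the empty complex, which the two checks above confirm.
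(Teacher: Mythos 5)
Your argument is correct and is essentially the same as the paper's: the paper simply cites the join homology formula (Milnor, Lemma 2.1), notes that over $\rat$ it reduces to $\redhom_k(X\ast Y)\cong\bigoplus_{i+j=k-1}\redhom_i(X)\otimes\redhom_j(Y)$, and takes dimensions, whereas you derive that same formula from the chain-level tensor decomposition and Künneth before taking dimensions. The extra chain-level detail is a fine way to fill in the cited step but does not change the route.
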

\begin{proof}
This follows from the formula for the reduced homology of the join (eg.\cite[Lemma 2.1]{Milnor}), which for rational coefficients reduces to
$$\redhom_k(X\ast Y)=\bigoplus_{\substack{i,j\geq -1\\ i+j=k-1}} \redhom_i(X)\otimes \redhom_j(Y), \qquad k\geq -1.$$
\end{proof}

\begin{center}
\begin{figure}
\includegraphics[scale=0.8]{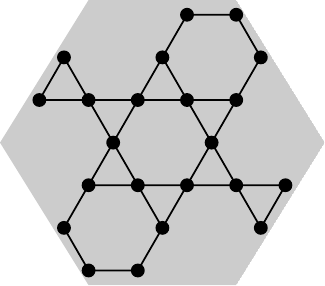}
\label{fig:extremal}
\caption{The induced subgraph $G$ of the $30$-vertex tile with $\redbetti(\ind(G))=14$.}
\end{figure}
\end{center}

\begin{proposition}
\label{prop:hexdimupper}
If $\hexdim$ is any hexagonal dimer lattice which can be tiled as in Fig.\ref{fig:hexdimer} then
$$\redbetti(\hexdim)\leq(14^{1/36}\cdot 2^{1/6})^{v(\hexdim)}.$$
\end{proposition}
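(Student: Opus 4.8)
The plan is to apply Lemma~\ref{lem:upperbound} to $G=\hexdim$ for a well-chosen vertex set $U$, using the tiling of $\hexdim$ by the $k=v(\hexdim)/36$ large hexagons, each of which consists of $36$ vertices. In every large hexagon I would single out the \emph{same} set of $6$ vertices: those whose deletion turns the hexagon into the $30$-vertex graph $T$ depicted in the figure, chosen so that they also meet every edge joining the hexagon to its neighbours. Let $U$ be the union of these sets over all $k$ hexagons, so that $|U|=6k=v(\hexdim)/6$. The point that must be read off the picture is the \emph{separation property}: once $U$ is removed there are no edges left between distinct large hexagons. Hence $\hexdim\setminus U$ is a disjoint union of $k$ copies of $T$, and, more to the point, for every simplex $\sigma\in\ind(\hexdim[U])$ the graph $\hexdim\setminus(U\cup N[\sigma])$ is still a disjoint union with one summand per hexagon, the $j$-th summand being $T$ with the vertices of $N[\sigma]$ lying in that hexagon removed; in particular each summand is an induced subgraph of $T$.

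Granting this, the estimate is purely formal. Using $\ind(G_1\sqcup G_2)=\ind(G_1)\ast\ind(G_2)$ together with the multiplicativity of total Betti numbers under joins (Fact~\ref{fact:bettijoin}) one gets, for every $\sigma\in\ind(\hexdim[U])$,
$$\redbetti\bigl(\ind(\hexdim\setminus(U\cup N[\sigma]))\bigr)=\prod_{j=1}^{k}\redbetti(\ind(G_j)),\qquad G_j\text{ an induced subgraph of }T.$$
If one knows that $\redbetti(\ind(G))\leq 14$ for \emph{every} induced subgraph $G$ of $T$ — the graph in the figure witnesses that $14$ is attained — then the product above is at most $14^{k}$. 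So Lemma~\ref{lem:upperbound} applies with $B=14^{k}$, and one concludes
$$\redbetti(\ind(\hexdim))\leq B\cdot 2^{|U|}=14^{v(\hexdim)/36}\cdot 2^{v(\hexdim)/6}=\bigl(14^{1/36}\cdot 2^{1/6}\bigr)^{v(\hexdim)}.$$

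The real content is the bound $\redbetti(\ind(G))\leq 14$ for all induced subgraphs $G$ of the $30$-vertex tile $T$, and this is the step I expect to be the main obstacle. It is a finite check, but $T$ has on the order of $2^{30}$ induced subgraphs, so it has to be carried out with some care. A workable recursion is to simplify $\ind(G)$ by repeatedly deleting a vertex $v$ that is dominated by another vertex (for which $\ind(G)\htpyequiv\ind(G\setminus v)$), and, when no such vertex is available, to split along some vertex $v$ via the cofibre sequence $\ind(G\setminus v)\incl\ind(G)\to\susp\,\ind(G\setminus N[v])$, which yields $\redbetti(\ind(G))\leq\redbetti(\ind(G\setminus v))+\redbetti(\ind(G\setminus N[v]))$. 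One may further restrict attention to the induced subgraphs that actually arise, namely those of the form $T\setminus N[\sigma]$ with $\sigma$ an independent subset of the few vertices of $U$ attached to the given hexagon; these are far fewer than $2^{30}$. Modulo this verification, the proposition follows at once from Lemma~\ref{lem:upperbound} and Fact~\ref{fact:bettijoin}.
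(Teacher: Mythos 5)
Your proposal is essentially the same as the paper's proof: take $U$ to be the $6k$ vertices separating the $30$-vertex tiles, apply Lemma~\ref{lem:upperbound}, reduce via Fact~\ref{fact:bettijoin} to bounding the total Betti number of independence complexes of induced subgraphs of the $30$-vertex tile, and finish with a finite verification that these are at most $14$. The paper carries out that last step by a direct computer homology calculation on the $2^{12}$ (only $217$ up to isomorphism) graphs of the form $T\setminus N[\sigma]$, rather than your proposed fold/cofibre recursion, but this is merely a choice of how to execute the finite check.
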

\begin{proof}
Recall that $\hexdim$ contains $k=v(\hexdim)/36$ large hexagons. Let $U$ consist of those vertices of $\hexdim$ which are not covered by the tiles. Then $|U|=6k=v(\hexdim)/6$ and the graph $\hexdim\setminus U$ is a disjoint union of $k$ hexagonal tiles with $30$ vertices each. The graph $\hexdim[U]$ has no edges.

Now suppose that $\sigma$ is any subset of $U$. Then the graph $\hexdim\setminus (U\cup N[\sigma])$ is a disjoint union of $k$ graphs, each of which is an induced subgraph of the $30$-vertex tile. More precisely, it is an induced subgraph obtained by removing the neighbourhood of some subset of the vertices of $U$ which surround that tile. There are $2^{12}=4096$ graphs that can arise in this way, with only $217$ isomorphism classes \cite{nauty}, and the homology of their independence complexes can be easily calculated by a computer \cite{Poly}. It turns out that for each of those graphs the total Betti number is at most $14$ (the graph which attains maximum is shown in Fig.\ref{fig:extremal}). Using Fact \ref{fact:bettijoin} we get that the total Betti number of $\hexdim\setminus (U\cup N[\sigma])$ is at most $B=14^k$.

Lemma \ref{lem:upperbound} now gives the conclusion:
$$\redbetti(\ind(\hexdim))\leq 14^k\cdot 2^{|U|}=14^{v(\hexdim)/36}\cdot 2^{v(\hexdim)/6}.$$
\end{proof}

\begin{remark}
If the dimensions of the grid $\hexdim$ do not allow it to be tightly tiled with the large hexagons then Prop.\ref{prop:hexdimupper} still holds asymptotically. This is because one can pack $\hexdim$ with hexagons leaving just a region of size proportional to the perimeter of the grid. The wasted vertices can then be added to the set $U$.
\end{remark}

\begin{remark}
Using exactly the same technique one proves the following counterparts to the lower bounds of \ref{prop:other}:
$$\redbetti(\ind(\tri_3))\leq (2^{3/8})^{v(\tri_3)}, \qquad \redbetti(\ind(\tri_4))\leq (10^{1/45}\cdot 2^{1/5})^{v(\tri_4)}.$$
\end{remark}

\section{Concluding remarks}

It would be most interesting to prove similar lower bounds for the classical triangular and hexagonal lattice, for which superfrustration is also predicted \cite{Eer}. In those cases, however, the present methods do not seem to work, and it is not clear if exponentially many homology classes should be given by embedded spheres or if more complicated constructions are necessary. Recent results in this direction include \cite{HuOthers}, where ``long and thin'' triangular lattices of size $c\times n$ for small constants $2\leq c\leq 7$ are investigated. Also, the constructions of Jonsson \cite{JonGrids} can be adapted to show that for the hexagonal grids of suitable sizes $c\times n$ with fixed $c$ the number of ground states is exponential in $n$.

More generally, it would also be interesting to know what aspect of regularity is responsible for superfrustration in arbitrary lattices.



\begin{thebibliography}{99}

\bibitem{Ada} M. Adamaszek, \textit{Maximal Betti number of a flag simplicial complex}, \href{http://arxiv.org/abs/1109.4775}{\texttt{arxiv/1109.4775}}



\bibitem{BLN} M. Bousquet-Melou, S. Linusson, E. Nevo, \textit{On the independence complex of square grids}, Journal of Algebraic Combinatorics, Vol. 27 (2008), Number 4, 423-450

 


\bibitem{Eer} H. van Eerten, \textit{Extensive ground state entropy in supersymmetric lattice models}, J. Math. Phys. 46, 123302 (2005)

\bibitem{EH} R. Ehrenborg, G. Hetyei. \textit{The topology of the independence complex}, Eur. J. Comb. 27(6): 906-923 (2006)



\bibitem{EngWitten} A. Engstr\"om. \textit{Upper bounds on the Witten index for supersymmetric lattice models by discrete Morse theory}, Eur. J. Comb. 30(2): 429-438 (2009) 

\bibitem{Fen2} P. Fendley, K. Schoutens, \textit{Exact Results for Strongly Correlated Fermions in $2+1$ Dimensions}, Phys. Rev. Lett. 95, 046403 (2005)

\bibitem{Fen} P. Fendley, K. Schoutens, W. van Eerten, \textit{Hard squares with negative activity}, J. Phys. A, 38(2): 315-322, 2005

\bibitem{HHFS} L. Huijse, J. Halverson, P. Fendley, K. Schoutens, \textit{Charge Frustration and Quantum Criticality for Strongly Correlated Fermions}, Phys. Rev. Lett. 101, 146406 (2008)

\bibitem{HuSch1} L. Huijse, K. Schoutens, \textit{Superfrustration of charge degrees of freedom}, The European Physical Journal B, Volume 64, Numbers 3-4, 543-550

\bibitem{HuSch2} L. Huijse, K. Schoutens, \textit{Supersymmetry, lattice fermions, independence complexes and cohomology theory}, Adv. Theor. Math. Phys., 14.2 (2010), 643-694

\bibitem{HuOthers} L. Huijse, D. Mehta, N. Moran, K. Schoutens, J. Vala, \textit{Supersymmetric lattice fermions on the triangular lattice: superfrustration and criticality},  \href{http://arxiv.org/abs/1112.3314}{\texttt{arxiv/1112.3314}}

\bibitem{JonCylinder} J. Jonsson, \textit{Hard Squares with Negative Activity on Cylinders with Odd Circumference}, Electr. J. Comb. 16(2) (2009), \#R5

\bibitem{JonRhombus} J. Jonnson, \textit{Hard Squares with Negative Activity and Rhombus Tilings of the Plane}, Electr. J. Comb. 13(1) (2006),  \#R67

\bibitem{JonDiag} J. Jonsson, \textit{Hard squares  on grids with diagonal boundary conditions}, preprint, 2008

\bibitem{JonGrids} J. Jonsson, \textit{Certain Homology Cycles of the Independence Complex of Grids}, Discrete \& Computational Geometry 43(4): 927-950 (2010)

\bibitem{Poly} E. Gawrilow, M. Joswig. \textit{polymake: a framework for analyzing convex polytopes}, Polytopes --- combinatorics and computation (Oberwolfach, 1997), 43–73, DMV Sem., 29, Birkh\"auser, Basel, 2000



\bibitem{KozHulls} D. Kozlov, \textit{Convex hulls of $f$- and $\beta$-vectors},  Discrete Comput. Geom. 18 (1997), no. 4, 421-431

\bibitem{Koz} D. Kozlov. \textit{Complexes of Directed Trees}, J. Comb. Theory, Ser. A 88(1): 112-122 (1999)

\bibitem{Book} D. Kozlov, \textit{Combinatorial Algebraic Topology}, Algorithms and Computation in Mathematics, Vol. 21, Springer-Verlag Berlin Heidelberg 2008



\bibitem{Milnor} J. Milnor, \textit{Construction of universal bundles, II}, Annals of Math., Vol.63, No.3, May 1956, pp. 430-436

\bibitem{nauty} B.D. McKay, The \textit{Nauty} graph automorphism package, \texttt{http://cs.anu.edu.au/\~{}bdm/nauty/}

\bibitem{Thapper} J. Thapper, \textit{Independence complexes of cylinders constructed from square and hexagonal grid graphs}, Lic. Thesis, \href{http://arxiv.org/abs/0812.1165}{\texttt{arxiv/0812.1165}}


\end{thebibliography}
\end{document}